\documentclass[preprint,12pt]{elsarticle}

\usepackage{amssymb}
\usepackage{multirow}
\usepackage{amsthm}
\usepackage{graphics}
\usepackage{epsfig}
\usepackage{amssymb}
\usepackage{graphicx}
\usepackage{subfigure}
\usepackage{color}
\usepackage{tikz}
\usetikzlibrary{patterns}
\usepackage{hyperref}
\usepackage{a4wide}
\usepackage{amsmath}
\usepackage{amsfonts}
\usepackage{enumerate}
\newcommand{\pf}{\noindent {\bf Proof: }}

\newtheorem*{theoremaux}{Theorem \theoremauxnum}
\gdef\theoremauxnum{1}

\newenvironment{theoremff}[2][]{%
	\def\theoremauxnum{\ref{#2}}
	\begin{theoremaux}[#1]
	}{%
\end{theoremaux}
}

\newtheorem{lemma}{\bf Lemma}[section]
 
\newtheorem{theorem}{\bf Theorem}[section]
\newtheorem{proposition}[lemma]{\bf Proposition}

\newtheorem{definition}{\bf Definition}[section]
\newtheorem{remark}{\bf Remark}[section]

\journal{~}

\begin{document}

\begin{frontmatter}



\title{On Connectivity of Comaximal Subgroup Graph}



\author{Angsuman Das\corref{cor1}}
\ead{angsuman.maths@presiuniv.ac.in}
\author{Arnab Mandal}
\ead{arnab.maths@presiuniv.ac.in}
\author{Labani Sarkar}
\ead{labanis890@gmail.com }
\address{Department of Mathematics, Presidency University, Kolkata, India}

\cortext[cor1]{Corresponding author}

\begin{abstract}
The co-maximal subgroup graph $\Gamma(G)$ of a finite group $G$ is defined to be a graph with the set of all non-trivial proper subgroups of $G$ as the set of vertices and two distinct vertices $H$ and $K$ are adjacent if and only if $HK=G$. The deleted co-maximal subgroup graph of $G$, denoted by $\Gamma^*(G)$, is defined as the graph obtained by removing the isolated vertices from $\Gamma(G)$. In this paper, we prove that for any finite group $G$, $\Gamma^*(G)$ is connected. Furthermore, we show that $\Gamma^*(G)$ either contains a cycle or is a star. When $\Gamma^*(G)$ contains a cycle, its girth is either $3$ or $4$. Finally, we classify all finite groups $G$ for which $\Gamma^*(G)$ is a star.
\end{abstract}

\begin{keyword}
solvable groups \sep nilpotent groups \sep maximal subgroups
\MSC[2020] 05C25, 20D10, 20D15 

\end{keyword}

\end{frontmatter}

\section{Introduction}
Associating a graph to an algebraic structure and studying the algebraic properties via graph‑theoretic methods is a well‑established line of research; see \cite{Cameron-survey} for an extensive survey. Among such constructions, the {\it comaximal subgroup graph} of a finite group was introduced in \cite{akbari} and has since attracted considerable attention \cite{1st-paper,2nd-paper,dn,arnab-comaximal,das-mandal-number-of-subgroups,zn,wei-etal}.

\begin{definition}\cite{akbari} Let $G$ be a group and $S$ be the collection of all non-trivial proper subgroups of $G$. The co-maximal subgroup graph $\Gamma(G)$ of a group $G$ is defined to be a graph with $S$ as the set of vertices and two distinct vertices $H$ and $K$ are adjacent if and only if $HK=G$. The deleted co-maximal subgroup graph of $G$, denoted by $\Gamma^*(G)$, is defined as the graph obtained by removing the isolated vertices from $\Gamma(G)$.
\end{definition}

\subsection{Our Contribution}
It was shown in \cite{1st-paper} that if $G$ has a maximal subgroup which is normal in $G$, then $\Gamma^*(G)$ is connected. Although some necessary and sufficient conditions of connectedness of $\Gamma^*(G)$ exists in literature, for instance Theorem 2.2 \cite{2nd-paper}, Theorem 3.9 \cite{wei-etal}, the general conjecture that {\it $\Gamma^*(G)$ is connected for all finite groups $G$. } has remained open. Our first main result settles this conjecture in the affirmative. 
\begin{theoremff}{Gammastar_connected}
    Let $G$ be a finite group. Then $\Gamma^*(G)$ is connected.
\end{theoremff}
Once the connectivity is guaranteed, the next question is whether $\Gamma^*(G)$ contains a cycle. We prove that exactly one of the following holds (Proposition \ref{Gamma*_tree=star} and  Proposition \ref{cycle-lemma}): either $\Gamma^*(G)$ contains a cycle of length $3$ or $4$, or $\Gamma^*(G)$ is a star. Moreover, we characterise the groups for which the latter occurs in Section \ref{star-section}. The classification of groups for which $\Gamma^*(G)$ is a star is given separately for nilpotent, solvable, supersolvable, and non‑solvable groups.

\section{Connectivity of $\Gamma^*(G)$}\label{connectivity-section}
In this section, we prove the main result of the paper regarding connectedness of $\Gamma^*(G)$. We deduce a few lemmas before delving into the actual proof.

\begin{lemma}\label{no-normal-perfect}
    Let $G$ be a finite group. Then $G$ is perfect if and only if no maximal subgroup of $G$ is normal.
\end{lemma}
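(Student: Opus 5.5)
The plan is to prove both directions through the correspondence between subgroups of $G$ containing a normal subgroup $N$ and subgroups of $G/N$. Here \emph{perfect} means $G=G'$. No earlier result of the paper is needed; the argument uses only elementary facts about finite groups. I will treat the trivial group separately: it is perfect and has no maximal subgroups, so the equivalence holds vacuously. From now on $G\neq 1$.

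($\Rightarrow$, argued contrapositively.) Suppose $M$ is a maximal subgroup of $G$ with $M\trianglelefteq G$.
\begin{enumerate}
\item By the correspondence theorem, subgroups of $G/M$ correspond to subgroups of $G$ lying between $M$ and $G$.
\item Maximality of $M$ says the only such subgroups are $M$ and $G$. So $G/M$ is a nontrivial group whose only subgroups are itself and the identity.
\item Such a group is cyclic, generated by any non-identity element, and must have prime order, since otherwise it would have a proper nontrivial subgroup. In particular $G/M$ is abelian.
\item Hence $G'\le M<G$, so $G$ is not perfect.
\end{enumerate}

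($\Leftarrow$, argued contrapositively.) Suppose $G'<G$.
\begin{enumerate}
\item Then $A=G/G'$ is a nontrivial finite abelian group.
\item Pick a prime $p$ dividing $|A|$. A finite abelian group has subgroups of every order dividing its order, so $A$ has a subgroup $B$ of index $p$.
\item By the correspondence theorem, $B$ is maximal in $A$, since there is no room for an intermediate subgroup by Lagrange.
\item Its preimage $M$ in $G$ has index $p$ in $G$, so $M$ is maximal in $G$.
\item $M$ is normal in $G$, because $B$ is normal in the abelian group $A$ and preimages of normal subgroups are normal.
\item So $G$ has a normal maximal subgroup.
\end{enumerate}

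There is no real obstacle. The only point needing care is the existence of an index-$p$ subgroup in a finite abelian group. This follows from the structure theorem, or more simply: take an element $x$ of order $p$ in $A$ by Cauchy's theorem, and induct on $|A|$ via $A/\langle x\rangle$. If $A/\langle x\rangle$ is trivial, take $B=1$. Otherwise, if $A/\langle x\rangle$ has order divisible by $p$, lift an index-$p$ subgroup of it. In the remaining case, with $p\nmid |A/\langle x\rangle|$, apply the argument to $A/\langle x\rangle$ for one of its own prime divisors $q$ instead; for the lemma any prime index suffices.
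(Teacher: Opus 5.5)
Your proof is correct and is essentially the paper's argument: a normal maximal $M$ makes $G/M$ of prime order, hence $G'\le M<G$. Conversely, a prime-index subgroup of the nontrivial abelian group $G/G'$ pulls back to a normal maximal subgroup of $G$. Your treatment of the trivial group and of the existence of a prime-index subgroup only fills in details the paper leaves implicit.

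One small point concerns your optional inductive aside. Its case split mixes two induction hypotheses: index exactly $p$ in one case, some prime index in the other. It is cleaner to induct directly on the claim that every nontrivial finite abelian group has a subgroup of prime index. When $A\neq\langle x\rangle$, you simply lift such a subgroup from $A/\langle x\rangle$, and no case split is needed.
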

\pf Let $G$ be perfect and if possible, $M$ be a maximal subgroup of $G$ which is normal in $G$. Then $G/M$ is of prime order, and hence $G'\leq M <G$, a contradiction. 

Conversely, let $G$ be such that no maximal subgroup of $G$ is normal. Suppose $G'$ is a proper subgroup of $G$. Then $G/G'$ is non-trivial abelian group and hence $G/G'$ has a normal subgroup $M/G'$ of prime index. This implies that $M$ is a maximal subgroup of $G$ and $M\lhd G$, a contradiction. Thus $G'=G$. \qed

\begin{lemma}
    Let $G$ be a finite group and $N\lhd G$. Then $\Gamma(G/N)$ is an induced subgraph of $\Gamma(G)$.
\end{lemma}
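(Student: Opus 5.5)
We realise $\Gamma(G/N)$ inside $\Gamma(G)$ through the correspondence theorem. Define
\[
\phi : V(\Gamma(G/N)) \to V(\Gamma(G)), \qquad \phi(H/N) = H,
\]
where $H/N$ runs over the non-trivial proper subgroups of $G/N$. Each such subgroup is $H/N$ for a unique subgroup $H$ with $N\leq H\leq G$. Since $H/N$ is non-trivial and proper, we get $N<H<G$. If $N\neq 1$, then $H\neq 1$ and $H$ is a genuine vertex of $\Gamma(G)$. By the correspondence theorem, $\phi$ is injective.

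The degenerate cases need a brief comment.
\begin{itemize}
\item If $N=1$, then $G/N\cong G$ and the statement is trivial.
\item If $N=G$, then $\Gamma(G/N)$ is the empty graph and there is nothing to prove.
\end{itemize}
So we may assume $1\neq N\neq G$; in fact the argument above only used $N\neq 1$.

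The main step is to show that $\phi$ preserves both adjacency and non-adjacency. Take distinct vertices $H/N$ and $K/N$. Because $N\lhd G$ and both $H,K$ contain $N$, we have
\[
(H/N)(K/N)=\{hNkN\}=\{hkN\}=(HK)/N .
\]
Since $N\subseteq H$, this set $(HK)/N$ is simply the image of the set $HK$ under the quotient map. Also $HK$ is a union of cosets of $N$, because $HKN=HK$. Hence
\[
(HK)/N=G/N \iff HK=G .
\]
The forward direction holds because $HK$ is a union of $N$-cosets whose images cover $G/N$, so $HK$ covers $G$. The reverse direction is immediate.

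It follows that $H/N\sim K/N$ in $\Gamma(G/N)$ if and only if $H\sim K$ in $\Gamma(G)$. So $\phi$ is an isomorphism from $\Gamma(G/N)$ onto the subgraph of $\Gamma(G)$ induced on $\{H : N<H<G\}$.

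The only point needing care is the equivalence $(HK)/N=G/N\iff HK=G$, which rests on $HK$ being $N$-saturated. Note that we never need $HK$ to be a subgroup, since the definition of adjacency only uses the product set. \qed
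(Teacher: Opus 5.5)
Your proposal is correct and follows essentially the same approach as the paper, which uses the same correspondence map $H/N\mapsto H$ and simply asserts that it is a graph isomorphism onto its image. Your check that $(H/N)(K/N)=HK/N$ equals $G/N$ exactly when $HK=G$ (because $HKN=HK$) supplies the detail the paper leaves implicit; the separate treatment of $N=1$ is harmless but unneeded, since $N<H$ already forces $H\neq 1$.
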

\pf Define $\psi: G/N\rightarrow G$ by $\psi(H/N)=H$ where $H$ is a subgroup of $G$ containing $N$. Clearly $\psi$ is a graph isomorphism from $G/N$ to $\psi(G)$ and hence the lemma follows.\qed

Although the next lemma is proved in Theorem 3.9 \cite{wei-etal}, we give an independent and self contained proof of the same.
\begin{lemma}\label{Phi(G)-NASC}
    Let $G$ be a finite group and $\Phi(G)$ be its Frattini subgroup. Then $\Gamma^*(G)$ is connected if and only if $\Gamma^*(G/\Phi(G))$ is connected.
\end{lemma}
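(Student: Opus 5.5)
Write $\Phi=\Phi(G)$ and $\bar G=G/\Phi$. The plan is to relate the non-isolated vertices of $\Gamma(G)$ to those of $\Gamma(\bar G)$ through the map $H\mapsto H\Phi$, and then transport paths in both directions. The key fact is the standard property of the Frattini subgroup: if $X\Phi=G$ then $X=G$. Two consequences follow.

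First, for subgroups $H,K$ of $G$, $HK=G$ holds if and only if $(H\Phi)(K\Phi)=G$. One direction is trivial. For the other, $(H\Phi)(K\Phi)=HK\Phi$ since $\Phi\lhd G$. This gives $\langle H,K\rangle\Phi=G$, hence $\langle H,K\rangle=G$. That alone does not give $HK=G$, so a cardinality argument is needed instead. We have $|HK\Phi|=|G|$. Comparing with $|HK|=|H||K|/|H\cap K|$ requires care, so I would first settle the case where $H$ or $K$ contains $\Phi$.

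Second, any vertex $H$ with a neighbour $K$ satisfies $H\Phi\neq G$. Indeed, $H\Phi=G$ forces $H=G$. So $H\Phi$ is a proper subgroup containing $\Phi$.

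The cleaner route is to show that every non-isolated $H$ is adjacent to, or joined by a short path to, $H\Phi$. Suppose $HK=G$. Then $(H\Phi)K=G$ as well, so $H\Phi$ is non-isolated whenever $H\Phi\ne H$. Moreover $H\Phi$ and $H$ share the neighbour $K$, giving the path $H - K - H\Phi$. This needs $H\Phi$ to be a nontrivial proper subgroup distinct from $K$, which it is: it is proper by the second consequence, nontrivial since it contains $H$, and distinct from $K$ because $H\Phi=K$ would give $HK=K\ne G$.

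So every non-isolated vertex of $\Gamma(G)$ lies in the same component as a non-isolated vertex containing $\Phi$. If $\Phi=1$ the statement is trivial, so assume $\Phi\neq 1$. The vertices containing $\Phi$ and lying in $\Gamma^*(G)$ correspond, via the induced-subgraph lemma (the $\psi$ map), to vertices $H/\Phi$ of $\Gamma(\bar G)$. Here one checks three things:
\begin{enumerate}[(i)]
\item For $H,K\supseteq\Phi$, $HK=G$ if and only if $(H/\Phi)(K/\Phi)=\bar G$.
\item $H$ with $\Phi\le H$ is non-isolated in $\Gamma(G)$ if and only if $H/\Phi$ is non-isolated in $\Gamma(\bar G)$. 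Given a neighbour $K$ of $H$, the subgroup $K\Phi$ is also a neighbour, and it is proper by the Frattini property.
\item The vertex $\Phi$ itself is isolated, since $\Phi K=G$ would give $K=G$. This matches the fact that the trivial subgroup is not a vertex of $\Gamma(\bar G)$.
\end{enumerate}

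For $(\Leftarrow)$, take two non-isolated vertices $H_1,H_2$ of $\Gamma(G)$. Move each to $H_i\Phi$ as above, take a path between $H_1\Phi/\Phi$ and $H_2\Phi/\Phi$ in $\Gamma^*(\bar G)$, and lift it via $\psi$. For $(\Rightarrow)$, take two vertices $A/\Phi,B/\Phi$ of $\Gamma^*(\bar G)$ and a path $A=X_0,\dots,X_n=B$ in $\Gamma^*(G)$. Replace each $X_i$ by $X_i\Phi$. Adjacency is preserved, because $X_iX_{i+1}=G$ implies $X_i\Phi X_{i+1}\Phi=G$. Consecutive vertices may now coincide, but only if $X_i\Phi=X_{i+1}\Phi=:Y$, which would give $Y=YY=G$, a contradiction. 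So the image is a walk in $\Gamma^*(\bar G)$.

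The main obstacle is bookkeeping rather than depth: checking in each replacement that the new vertex is proper, nontrivial and distinct from its neighbours. Everything rests on the single fact $X\Phi=G\Rightarrow X=G$.
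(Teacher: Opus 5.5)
Your proof is correct and follows essentially the same route as the paper. The paper likewise reaches vertices containing $\Phi(G)$ by enlarging: to a maximal subgroup containing $X_i$ in one direction, and to a maximal subgroup adjacent to $H$ in the other, where you use $X_i\Phi(G)$ and $H\Phi(G)$. It then uses that enlarging preserves adjacency and passes to or lifts from $G/\Phi(G)$. Your bookkeeping about coinciding consecutive vertices and the isolation of $\Phi(G)$ is in fact more careful than the paper's.

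Do delete your unproved ``first consequence'', though. The equivalence $HK=G\iff(H\Phi)(K\Phi)=G$ holds when $H$ or $K$ contains $\Phi$, since then $(H\Phi)(K\Phi)=HK$. It is false in general: in $D_8$ with $\Phi=\langle r^2\rangle$, $H=\langle s\rangle$, $K=\langle rs\rangle$, one has $(H\Phi)(K\Phi)=D_8$ while $|HK|=4$. Fortunately your final argument never uses it.
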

\pf Let $\Gamma^*(G/\Phi(G))$ be connected and $H,K\in \Gamma^*(G)$. Then there exists maximal subgroups $M_H$ and $M_K$ of $G$ such that $H \sim M_H$ and $K\sim M_K$ in $\Gamma(G)$. Clearly $\Phi(G)\subseteq M_H,M_K$ and as $\Gamma^*(G/\Phi(G))$ is connected, there exists a path $M_H/\Phi(G)\sim X_1/\Phi(G) \sim X_2/\Phi(G) \sim \cdots \sim X_t/\Phi(G)\sim M_K/\Phi(G)$ joining $M_H/\Phi(G)$ and $M_K/\Phi(G)$ in $\Gamma^*(G/\Phi(G))$. Thus $H\sim M_H\sim X_1\sim X_2\sim \cdots \sim X_t\sim M_K\sim K$ is a path joining $H$ and $K$ in $\Gamma(G)$.

Conversely, let $\Gamma^*(G)$ be connected and $U/\Phi(G),V/\Phi(G) \in \Gamma^*(G/\Phi(G))$. Therefore, $U/\Phi(G)\sim U_1/\Phi(G)$ and $V/\Phi(G)\sim V_1/\Phi(G)$ in $\Gamma^*(G/\Phi(G))$. Thus $UU_1=VV_1=G$, i.e., $U\sim U_1$ and $V\sim V_1$ in $\Gamma(G)$, i.e., $U,V \in \Gamma^*(G)$. As $\Gamma^*(G)$ is connected, there exists a path $U\sim X_1\sim X_2\sim \cdots \sim X_t\sim V$ in $\Gamma^*(G)$. For each $i \in \{1,2,\ldots,t\}$, do the following:
\begin{itemize}
    \item if $\Phi(G)\subseteq X_i$, keep $X_i$ unchanged, and
    \item if $\Phi(G)\not\subseteq X_i$, replace $X_i$ by a maximal subgroup $M_i$ containing $X_i$. (Note that $\Phi(G)\subseteq M_i$.)
\end{itemize}
So, without loss of generality, we can assume $\Phi(G)\subseteq X_i$ for all $i$ and hence we get a path $U/\Phi(G)\sim X_1/\Phi(G)\sim X_2/\Phi(G)\sim \cdots \sim X_t/\Phi(G)\sim V/\Phi(G)$ in $\Gamma^*(G/\Phi(G))$.\qed

 Now, we are in a position to prove the main result of this section. 
\begin{theorem}\label{Gammastar_connected}
    Let $G$ be a finite group. Then $\Gamma^*(G)$ is connected.
\end{theorem}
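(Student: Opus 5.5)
The plan is to reduce everything to paths between maximal subgroups. The first observation is that if $H\sim K$ in $\Gamma(G)$, then $H\sim M$ for every maximal subgroup $M\supseteq K$ (note $M\neq H$, since $H\subseteq M$ would force $HK\subseteq M$). So every vertex of $\Gamma^*(G)$ is adjacent to some maximal subgroup, and that maximal subgroup is itself non-isolated. Hence it suffices to show that any two maximal subgroups $M,N$ that are non-isolated lie in a common component.

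Two reductions come first.
\begin{enumerate}
\item By Lemma~\ref{Phi(G)-NASC} we may assume $\Phi(G)=1$.
\item If some maximal subgroup $M$ is normal, the result of \cite{1st-paper} already gives connectivity. The direct argument is this: $G/M$ has prime order $p$, so every subgroup $K\not\subseteq M$ satisfies $MK=G$. Every non-isolated vertex is therefore within distance two of $M$, via a subgroup of order a power of $p$ not contained in $M$, or via $M$ itself.
\end{enumerate}
So by Lemma~\ref{no-normal-perfect} we may assume $G$ is perfect and Frattini-free. In particular $G$ is non-solvable and every maximal subgroup has at least two conjugates.

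The core step is the following. Given non-isolated maximal subgroups $M\neq N$ with $MN\neq G$, I want to find a third subgroup $L$ (a maximal one suffices) with $ML=G=NL$, or else a short chain of such. My first attempt is a counting criterion. For any subgroup $L$, the product $ML$ equals $G$ exactly when $L$ acts transitively on the coset space $G/M$. Write $\Omega_M=G/M$ and $\Omega_N=G/N$. I then look for a single proper subgroup $L$ that is transitive on both $\Omega_M$ and $\Omega_N$, equivalently on the disjoint union of the two actions.

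Non-isolatedness of $M$ means that $G$ has a proper transitive subgroup in its action on $\Omega_M$, and similarly for $N$. The natural candidates for $L$ are the conjugates of the neighbours $K_M$ of $M$ and $K_N$ of $N$, and intersections or joins built from them. If $K_M$ is already transitive on $\Omega_N$, we are done with a path of length two. Otherwise $K_M N\neq G$, and I would enlarge $K_M$ to a maximal subgroup $M'$ containing it. Then $M'$ is adjacent to $M$, and I would iterate on the pair $(M',N)$. A descending or ascending argument on a suitable invariant, for instance $|M\cap N|$ or the set of primes dividing the indices, should make this process terminate.

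The main obstacle is exactly this core step: two maximal subgroups of a perfect group whose product is not $G$ need not admit an obvious common neighbour, and elementary counting does not force transitivity. If the iterative argument fails, my fallback is to pass to a minimal normal subgroup $N_0=T^k$ (with $T$ non-abelian simple, since $G$ is perfect and Frattini-free) and split into two cases.
\begin{itemize}
\item If a maximal subgroup $M$ does not contain $N_0$, then $MN_0=G$, so $M\sim N_0$. All such $M$ are therefore joined through the vertex $N_0$.
\item If $M\supseteq N_0$, then $M/N_0$ is maximal in $G/N_0$, and I would argue by induction on $|G|$. By the second lemma, $\Gamma(G/N_0)$ is an induced subgraph of $\Gamma(G)$, so the maximal subgroups containing $N_0$ are connected among themselves whenever at least one is non-isolated in the quotient.
\end{itemize}
It then remains to connect the two families. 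I would do this by noting that a maximal $M\supseteq N_0$ is adjacent to some $K$, and that $K\not\supseteq N_0$ may be assumed (otherwise we work inside the quotient). Then a maximal subgroup containing $K$ but not $N_0$ exists, and it is joined to $N_0$. Verifying that this last choice is always possible is where I expect the real difficulty. If needed, I would invoke known factorization results for almost simple groups to handle the case $G=N_0$ simple.
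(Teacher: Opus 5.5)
\textbf{Verdict: genuine gap.} The simple-group case, which carries the whole theorem, is not proved.

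\textbf{What works.} Your reductions follow the same skeleton as the paper: pass to $\Phi(G)=1$, dispose of the case with a normal maximal subgroup, reduce to $G$ perfect, and take a minimal normal subgroup $N_0$. The non-simple case is actually easier than you fear. Since $\Phi(G)=1$, there is a maximal $K$ with $N_0\not\le K$, so $KN_0=G$. Then $MK\supseteq N_0K=G$ for every maximal $M\ge N_0$. So every maximal subgroup containing $N_0$ is adjacent to every maximal subgroup not containing it. Hence all maximal subgroups lie in one component whenever $1\neq N_0\neq G$, and no induction on $G/N_0$ is needed.

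Your proposed bridge, ``a maximal subgroup containing $K$ but not $N_0$,'' need not exist: if $KN_0<G$, every maximal overgroup of $K$ may contain $N_0$. But with the observation above you do not need it. A smaller point: in the normal-maximal case, the path for $H\le M$ is $H\sim K\sim M$ with $K$ any neighbour of $H$. A $p$-subgroup outside $M$ need not be adjacent to $H$.

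\textbf{The gap: $G$ simple.} Your iterative core step has no termination mechanism. Each new $M'$ is chosen adjacent to the previous subgroup, so the walk never leaves the component of $M$. If $\Gamma^*(G)$ were disconnected, the walk would simply cycle. You do not show that any invariant, such as $|M\cap N|$, moves monotonically, so the argument presupposes the connectivity it is meant to prove. Your fallback, ``invoke known factorization results,'' does not say what statement is needed.

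\textbf{What the paper uses there.} The paper isolates the required property of a finite simple group $G$. Suppose $M_1,H_1,M_2,H_2$ are maximal with $G=M_1H_1=M_2H_2$ and $M_1M_2\neq G$. Then $M_1H_2=G$ or $M_2H_1=G$, which gives a path of length two. Conjugate maximal subgroups are handled separately, since $G=HK$ implies $G=H^xK^y$. The paper checks this property family by family using the classification of finite simple groups and the Liebeck--Praeger--Saxl tables of maximal factorizations:
\begin{itemize}
\item for $A_n$, by comparing intransitive and $k$-homogeneous factors;
\item for $A_6$, by a computer check;
\item for sporadic, classical and exceptional groups, from the tables.
\end{itemize}
Without an input of this kind your argument does not cover even the alternating groups. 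The paper itself lists an elementary replacement for this step as an open problem.
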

\pf If $G$ has a maximal subgroup $M$ which is normal in $G$, then $\Gamma^*(G)$ is connected. If $G$ has no maximal subgroup which is normal in $G$, then by Lemma \ref{no-normal-perfect}, $G$ must be perfect. Suppose $G$ be a perfect group of minimum order for which the theorem does not hold, i.e., $\Gamma^*(G)$ is not connected. Then $\Phi(G)$ must be trivial, as otherwise it would contradict Lemma \ref{Phi(G)-NASC}.

{\it Claim 1:} $G$ is simple.\\
{\it Proof of Claim 1:} Suppose $N$ be a minimal normal subgroup of $G$. Since $\Phi(G)$ is trivial, there exists at least one maximal subgroup $M$ which does not contain $N$ and hence $M\sim N$ in $\Gamma(G)$, i.e., $N \in \Gamma^*(G)$. Now, as $G$ is perfect, $G/N$ is also perfect and by minimality of $|G|$, $\Gamma^*(G/N)$ is connected. Let $L/N\in \Gamma^*(G/N)$ be a fixed vertex where $L$ is a maximal subgroup of $G$. Then $L\in \Gamma^*(G)$. Now, let $H$ be an arbitrary vertex in $\Gamma^*(G)$. Thus there exists $K$ such that $H\sim K$ in $\Gamma^*(G)$, i.e., $HK=G$. If $H\sim N$, let $M_1$ be a maximal subgroup containing $H$, then $H\sim N\sim M_1\sim L$ (last adjacency occurs as $N\leq L$). Similarly, if $K\sim N$, we get $H\sim K\sim N\sim M_2\sim L$ where $M_2$ is a maximal subgroup of $G$ containing $K$. If none of $H$ and $K$ are adjacent to $N$, then we have $HN\neq G$ and $KN\neq G$. Also $(HN)(KN)=HKN=G$, i.e., $HN\sim KN$ in $\Gamma(G)$ and $HN/N\sim KN/N$ in $\Gamma(G/N)$. Hence $KN/N \in \Gamma^*(G/N)$. As $\Gamma^*(G/N)$ is connected, there is a path joining $KN/N$ and $L/N$ in $\Gamma^*(G/N)$. Thus we get a path joining $KN$ and $L$ in $\Gamma^*(G)$. Thus we get a path joining $H$ and $L$ in $\Gamma^*(G)$ via $KN$. Hence in any case we get a path joining an arbitrary vertex $H\in \Gamma^*(G)$ and a fixed vertex $L \in \Gamma^*(G)$. Hence $\Gamma^*(G)$ is connected. But this is a contradiction. So, such $N\lhd G$ can not exist and hence $G$ is simple, thereby proving the claim.

Now, it suffices to show that $\Gamma^*_{max}(G)$ (vertices are non-conjugate maximal subgroups with at least one factorization) is connected, as by Theorem 2.2 \cite{2nd-paper}, that would imply $\Gamma^*(G)$ is connected. Let $M_1$ and $M_2$ be two arbitrary maximal subgroups of $G$ in $\Gamma^*_{max}(G)$. If $M_1M_2=G$, we are done. If not, there exists two maximal subgroups $H_1,H_2$ of $G$ such that $M_1H_1=M_2H_2=G$. 

{\it Claim 2:} Either $M_1H_2=G$ or $M_2H_1=G$.\\
{\it Proof of Claim 2:} As $G$ is a finite simple group, it belongs to one of the following classes:
\begin{itemize}
    \item If $G$ is an alternating group $A_n, n\geq 5$, from Theorem D and Corollary 5 of \cite{lps-book}, except for $n=6$, any maximal factorization of $G$ involves one $S_{n-k}\times S_k$ for some $1\leq k\leq 5$ and one $k$-homogeneous maximal subgroup. If two factorizations use different $k$ values, say $k_1<k_2$, then the $k_2$-homogeneous factor is also $k_1$-homogeneous, hence its product with the intransitive factor of the first factorization yields $G$. If both use the same $k$, then the intransitive factors are conjugate, contradicting non‑conjugacy. For $n=6$, we can check using GAP that Claim 2 holds for $A_6$.
    \item If $G$ is one of the sporadic groups and one can check directly from Table 6 of \cite{lps-book} that for any two factorizations $G=M_1H_1=M_2H_2$ where $M_1M_2\neq G$, one of the two cross-products, $M_1H_2,M_2H_1$ is equal to $G$.
    \item If $G$ is a classical simple group of Lie type, one can check that Claim 2 holds from Table $1,2,3$ and $4$ of \cite{lps-book}.
    \item If $G$ is an exceptional simple group of Lie type, similarly one can check that Claim 2 holds from Table $5$ of \cite{lps-book}.
\end{itemize}
Thus Claim 2 holds for all simple groups and $\Gamma^*_{max}(G)$ (vertices are non-conjugate maximal subgroups with at least one factorization) is connected.

\qed

\begin{remark}
    Note that in \cite{1st-paper}, it was shown that if $G$ has a maximal subgroup which is normal in $G$, then $\Gamma^*(G)$ is connected. In the above theorem, we prove the same result for an arbitrary finite group without any additional assumptions like the existence of normal maximal subgroups.
\end{remark}

Once the connectedness is established, in the next two propositions we show that either $\Gamma^*(G)$ contains a $3$-cycle or $4$-cycle, or $\Gamma^*(G)$ is a star. We first recall few standard results in finite group theory.

\begin{lemma}[Lemma 6, \cite{gazonzi}]\label{conjugate-lemma}
    Let $G$ be a finite group such that $G=HK$ for two subgroups $H,K$ of $G$. Then $G=(xHx^{-1})(yHy^{-1})$ for all $x,y \in G$.
\end{lemma}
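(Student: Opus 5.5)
The statement as printed reads $(yHy^{-1})$ in the second factor. I will read this as $(yKy^{-1})$, which is the intended form, matching the hypothesis $G=HK$. The plan is a direct computation with sets: write the product of the two conjugates as $xH\,(x^{-1}y)\,K\,y^{-1}$ and use the factorization $G=HK$ to absorb the middle element $x^{-1}y$.

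\emph{Step 1.} Expand the product as a product of subsets of $G$:
\[
(xHx^{-1})(yKy^{-1}) = x\,H\,(x^{-1}y)\,K\,y^{-1}.
\]

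\emph{Step 2.} Since $G=HK$, the element $x^{-1}y$ can be written as $hk$ with $h\in H$ and $k\in K$. Because $H$ and $K$ are subgroups, $Hh=H$ and $kK=K$. Hence
\[
H\,(x^{-1}y)\,K = (Hh)(kK) = HK = G.
\]

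\emph{Step 3.} It follows that
\[
(xHx^{-1})(yKy^{-1}) = xGy^{-1} = G,
\]
since left and right translation are bijections of $G$ onto itself.

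As a side remark, taking inverses in $G=HK$ gives $G=G^{-1}=K^{-1}H^{-1}=KH$. So the order of the factors does not matter, and the same conclusion holds for $(yKy^{-1})(xHx^{-1})$. I do not expect a genuine obstacle here. The only point needing care is Step 2, namely noticing that the middle element $x^{-1}y$ must be split as $hk$ so that each piece is absorbed by the subgroup next to it.
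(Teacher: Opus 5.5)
Your proof is correct. Reading the second factor as $yKy^{-1}$ is more than the intended reading; it is a necessary correction. As printed, the statement is false: for $G=\mathbb{Z}_6=HK$ with $H=\mathbb{Z}_2$ and $K=\mathbb{Z}_3$ one gets $(xHx^{-1})(yHy^{-1})=H\neq G$.

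The paper does not prove this lemma; it only quotes it from \cite{gazonzi}, so there is no in-paper argument to compare against. Your computation is the standard proof: write $x^{-1}y=hk$, so that $H(x^{-1}y)K=(Hh)(kK)=HK=G$, and then $xGy^{-1}=G$. It does not use the finiteness of $G$.

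Your side remark $KH=(HK)^{-1}=G$ is also right. It is what makes adjacency in $\Gamma(G)$ symmetric, which the paper relies on implicitly when it applies this lemma in Propositions \ref{Gamma*_tree=star} and \ref{cycle-lemma}.
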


\begin{lemma}[\cite{ore}]\label{solvable-conjugate-lemma}
    If $G$ is a solvable group and $M$ and $N$ are two maximal subgroups of $G$, then either $MN=G$ or $M$ and $N$ are conjugate in $G$.
\end{lemma}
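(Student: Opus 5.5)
This lemma is a classical result of Ore, so the plan is to reprove it along the standard lines rather than cite \cite{ore}. Suppose $M$ and $N$ are maximal subgroups of the solvable group $G$ with $MN\neq G$, and argue by induction on $|G|$ that $M$ and $N$ are conjugate.

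\emph{Step 1: reduce to core-free subgroups.} Let $K=\mathrm{core}_G(M)$. If $K\neq 1$, two cases arise.
\begin{enumerate}[(a)]
\item If $K\leq N$, pass to $G/K$. There $M/K$ and $N/K$ are maximal, $(M/K)(N/K)=MN/K\neq G/K$, and $G/K$ is solvable of smaller order. By induction $M/K$ and $N/K$ are conjugate in $G/K$, and pulling back shows $M$ and $N$ are conjugate in $G$.
\item If $K\not\leq N$, then $NK$ is a subgroup properly containing $N$, so $NK=G$ by maximality of $N$. Hence $MN\supseteq KN=G$, contradicting $MN\neq G$.
\end{enumerate}
So we may assume $M$ is core-free, and by the symmetric argument $N$ is core-free as well.

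\emph{Step 2: use a minimal normal subgroup.} Take a minimal normal subgroup $A$ of $G$. Since $G$ is solvable, $A$ is elementary abelian. Because $M$ is core-free, $A\not\leq M$, so $G=AM$. Then $A\cap M$ is normalized by $M$, and also by $A$ since $A$ is abelian, so $A\cap M\lhd G$. Core-freeness of $M$ forces $A\cap M=1$, so $M$ is a complement to $A$ in $G$. The same holds for $N$.

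\emph{Step 3: show $A$ is the unique minimal normal subgroup.} If $B\neq A$ were another minimal normal subgroup, it would also complement $M$, giving $|A|=|B|=|G:M|$. Since $A\cap B=1$, the subgroup $AB\cong A\times B$ would meet $M$ in a subgroup normalized by $M$ and by $AB$, hence normal in $G$. Core-freeness makes $AB\cap M=1$, so $|AB|\le |G:M|=|A|$, which is a contradiction. Thus $A$ is unique, and moreover $C_G(A)=A$: the subgroup $C_G(A)\cap M$ is normal in $G$ and hence trivial, so $C_G(A)=A$ by Dedekind's law.

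\emph{Step 4: conjugacy of complements.} This is the main obstacle. Since $A=C_G(A)$ is a self-centralizing normal $p$-subgroup and $G$ is solvable, $O_p(G/A)=1$; otherwise $O_p(G)$ would be a normal $p$-subgroup strictly containing $A$, which is impossible in this primitive situation. Hence $p\nmid |G:A|=|M|$. So $A$ is a normal Sylow $p$-subgroup with complements $M$ and $N$, and by Schur--Zassenhaus (the abelian case is elementary via $H^1=0$), $M$ and $N$ are conjugate.

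The delicate point in Step 4 is the claim $O_p(G/A)=1$, and I expect to handle it as follows. Let $P/A=O_p(G/A)$. Then $P=A(P\cap M)$, and $P\cap M$ is a $p$-group normalized by $M$. Since $P\lhd G$ is a $p$-group, $Z(P)\cap A\neq 1$ is normal in $G$, so $A\leq Z(P)$ by minimality of $A$. This gives $P\leq C_G(A)=A$. Alternatively, one can avoid coprimality altogether with the standard cohomological argument for a faithful irreducible module.

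Note that the contrapositive of this statement, namely that $MN=G$ whenever $M$ and $N$ are non-conjugate maximal subgroups, is exactly the form in which the result will be applied.
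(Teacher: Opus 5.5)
The paper gives no proof of this lemma; it only cites Ore, so your argument has to stand on its own. Steps 1--3 are correct: the core reduction, the fact that a core-free maximal subgroup complements every minimal normal subgroup, the uniqueness of $A$, and $C_G(A)=A$. Your derivation of $O_p(G/A)=1$ is also correct.

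The gap is the sentence ``Hence $p\nmid |G:A|=|M|$.'' The condition $O_p(G/A)=1$ says only that $G/A$ has no nontrivial normal $p$-subgroup. It does not say that $p$ fails to divide $|G/A|$. Take $G=S_4$, $A=V_4$ and $M=S_3$ a point stabiliser:
\begin{itemize}
\item $M$ is a core-free maximal subgroup;
\item $A$ is the unique minimal normal subgroup and $C_G(A)=A$;
\item $O_2(G/A)=O_2(S_3)=1$;
\item yet $|M|=6$ is even, and $A$ is not a Sylow $2$-subgroup.
\end{itemize}
So Schur--Zassenhaus cannot be applied to $A\lhd G$, and the conjugacy of $M$ and $N$ is not established. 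Your fallback cohomological remark is not carried out. Moreover, it does not really ``avoid coprimality'': the standard proof that $H^1(G/A,A)=0$ here goes through a normal $q$-subgroup of $G/A$ with $q\neq p$, which is exactly the missing idea.

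The fix is to use coprimality one layer higher, via the Frattini-type argument below.
\begin{enumerate}
\item If $G=A$, then $M=N=1$ and there is nothing to prove.
\item Otherwise let $B/A$ be a minimal normal subgroup of $G/A$. Since $G/A$ is solvable with $O_p(G/A)=1$, $B/A$ is a $q$-group with $q\neq p$.
\item By Dedekind's law, $B=A(B\cap M)=A(B\cap N)$, and $A\cap M=A\cap N=1$. Hence $B\cap M$ and $B\cap N$ are Sylow $q$-subgroups of $B$, so $B\cap N=(B\cap M)^b$ for some $b\in B$.
\item $B\cap M\neq 1$ is normalized by $M$, since $B\lhd G$. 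It is not normal in $G$, as otherwise it would lie in $\mathrm{core}_G(M)=1$. Maximality of $M$ therefore gives $N_G(B\cap M)=M$, and likewise $N_G(B\cap N)=N$.
\item Hence $N=N_G\bigl((B\cap M)^b\bigr)=M^b$.
\end{enumerate}
With this replacing Step 4, your proof of Ore's lemma is complete.
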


\begin{proposition}\label{Gamma*_tree=star}
    Let $G$ be a finite group. Then $\Gamma^*(G)$ is a tree if and only if $\Gamma^*(G)$ is a star.
\end{proposition}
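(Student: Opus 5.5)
The direction ``star $\Rightarrow$ tree'' is immediate. For the converse, I would argue that a tree $\Gamma^*(G)$ has diameter at most $2$; a tree of diameter at most $2$ is a star (or a single edge, itself a star $K_{1,1}$). Assume $\Gamma^*(G)$ is a tree; by Theorem \ref{Gammastar_connected} it is connected, and it has at least one edge.

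\emph{Step 1: every vertex is adjacent to a maximal subgroup.} Let $H\in\Gamma^*(G)$ with $HK=G$, and let $M$ be a maximal subgroup containing $K$. Then $HM\supseteq HK=G$. Moreover $H\neq M$, since otherwise $K\le H$ and $HK=H\neq G$. Hence $H\sim M$. In the same way, if $H\sim K$ and $H\le H'<G$, $K\le K'<G$ with $H'\ne K'$, then $H'\sim K'$. This ``upward closure'' is the tool for producing cycles.

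\emph{Step 2: two maximal vertices are adjacent, or the tree gives a cycle.} Suppose $H\sim K$ with $H$ not maximal, and choose a maximal $M\supsetneq H$. If $M\neq K$, then $M\sim K$ by upward closure. Now take a maximal $M'\supseteq K$. If $M'\ne K$ and $M'\ne M$, then $H\sim M'$ and $M\sim M'$, so $H,K,M,M'$ contain a cycle $H\sim K\sim M\sim M'\sim H$ (the vertices are distinct and the edges are valid by upward closure). In a tree this is impossible. Hence every edge forces strong restrictions: both endpoints have a unique maximal overgroup, or one endpoint is itself maximal. I would then show that there is a vertex $M_0$, a maximal subgroup, adjacent to every other vertex. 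Take any edge $H\sim K$ and show that one of $H,K$ (or its unique maximal overgroup) is adjacent to all vertices, by repeatedly applying upward closure to build $3$- or $4$-cycles whenever a vertex lies at distance $3$.

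\emph{Step 3: diameter at most $2$.} Suppose there is a path $A\sim B\sim C\sim D$ of length $3$ in the tree. Replace $B$ by a maximal $M_B\supseteq B$ and $C$ by a maximal $M_C\supseteq C$. Upward closure gives $A\sim M_B$, $M_B\sim C$, $B\sim M_C$, $M_C\sim D$ and $M_B\sim M_C$, provided the relevant subgroups are distinct. A case analysis on whether $B=M_B$ and $C=M_C$ then produces a cycle among $\{A,B,C,D,M_B,M_C\}$. For example, if $B<M_B$, then $A\sim B\sim C\sim M_B\sim A$ is a $4$-cycle, since $C\sim M_B$ holds because $M_B\supseteq B$. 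The same holds if $C<M_C$. So $B$ and $C$ are maximal. Then take $D$: if $D$ is not maximal, put $M_D\supsetneq D$, so that $C\sim M_D$, giving the triangle $C\sim D\sim M_D$? This fails, because $D\sim M_D$ need not hold. Instead use $B\sim$? Here the argument needs more than upward closure. I would invoke Lemma \ref{conjugate-lemma}: since $BC=G$, conjugates of $B$ also factor with $C$. If $B$ is not normal, then a conjugate $B^g\neq B$ gives a second neighbour of $C$, and $B^g\sim A$'s analogue yields a cycle $C\sim B\sim A$ \dots. Normal maximal $B$ with $BC=G$ and $B\sim A$ gives $A\not\le B$, so $AB=G$, and similar reasoning handles $C$.

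The main obstacle is this final case, where all the middle vertices are maximal and possibly normal. There I would use that $B$ normal and maximal makes $G/B$ cyclic of prime order, so every subgroup not in $B$ is adjacent to $B$. I would then try to show that $D\not\le B$, which forces $D\sim B$ and creates the triangle $B\sim C\sim D\sim B$. If instead $D\le B$, then $DC=G$ with $C$ maximal, and I would use $C\cap B$ to locate a further neighbour that closes a cycle.
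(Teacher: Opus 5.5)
\textbf{There is a genuine gap at the final case of Step 3.} Up to that point your reduction is sound. In a path $A\sim B\sim C\sim D$ of the tree, the upward-closure $4$-cycle $A\sim B\sim C\sim M_B\sim A$ shows that the internal vertices $B,C$ are maximal. Lemma \ref{conjugate-lemma} then shows they are normal: otherwise $A\sim B\sim C\sim B^g\sim A$ is a $4$-cycle, with $B^g\neq A,C$ because $G$ is never a product of two conjugate proper subgroups. Normality forces $A\le C$ and $D\le B$, since otherwise you get a triangle. (Step 2 is unnecessary, because a tree of diameter at most $2$ is already a star. Its ``universal vertex'' plan is not an argument as written.)

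Your two suggestions for the remaining case do not close it. The configuration $D\le B$, $A\le C$ is consistent with every local fact you have derived, so you cannot refute it by trying to show $D\not\le B$. Looking at $B\cap C$ gives no neighbour that closes a cycle either.

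The missing idea is global. Because $B$ and $C$ are both normal and maximal, any maximal subgroup $M\notin\{B,C\}$ satisfies $MB=MC=G$, so $B\sim C\sim M\sim B$ is a triangle. Hence $B$ and $C$ are the only maximal subgroups of $G$. Both are normal, so $G$ is nilpotent with exactly two maximal subgroups, i.e.\ $G\cong\mathbb{Z}_{p^aq^b}$.

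Now $AD$ is a subgroup, since $G$ is abelian. It is not contained in $B$, because $A\not\le B$, and not contained in $C$, because $D\not\le C$. So $AD=G$, and $A\sim B\sim C\sim D\sim A$ is a $4$-cycle, a contradiction.

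This is exactly how the paper's proof finishes. The paper works with two leaves at distance at least $3$ and their support vertices rather than the internal vertices of a $3$-path. Otherwise it follows the same route: maximal, then normal, then exactly two maximal subgroups, then cyclic of order $p^aq^b$ and a $4$-cycle.
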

\begin{proof}
    Let $\Gamma^*(G)$ be a tree. It suffices to prove that distance between any two leaves is $2$. Let $H,K$ be two leaves of $\Gamma^*(G)$. If $H\sim K$ or $H\sim L\sim K$ for some $L\in \Gamma^*(G)$, then we are done. Suppose not, as $\Gamma^*(G)$ is a tree, we get a path $H\sim A\sim X_1 \sim \cdots \sim X_l \sim B\sim K$ in $\Gamma^*(G)$. As any support vertex (vertex adjacent to a leaf) in $\Gamma^*(G)$ is maximal in $G$, $A,B$ are maximal subgroups of $G$. Moreover, $A,B\lhd G$, as otherwise (by Lemma \ref{conjugate-lemma}) $H$ (and respectively $K$) will also be adjacent to other conjugates of $A$ (and respectively $B$) in $G$, contradicting that $H$ and $K$ are leaves.

    Thus we have $A\sim B$, i.e., $H\sim A\sim B\sim K$. Moreover, if $G$ has any other maximal subgroup $M$ other than $A,B$, then $M,A,B$ will form a triangle in $\Gamma^*(G)$, a contradiction. Thus $G$ has exactly two maximal subgroups and hence $G\cong \mathbb{Z}_{p^aq^b}$. However, by Theorem 3.8 \cite{1st-paper}, $\Gamma^*(G)$ has $4$-cycle, a contradiction. Thus $d(H,K)=1$ or $2$.
\end{proof}

\begin{proposition}\label{cycle-lemma}
    Let $G$ be a finite group such that $\Gamma^*(G)$ has a cycle. Then girth of $\Gamma^*(G)$ is either $3$ or $4$.
\end{proposition}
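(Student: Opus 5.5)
We argue by contradiction: suppose $\Gamma^*(G)$ has a cycle but no $3$-cycle and no $4$-cycle. The aim is to force a triangle or a square from the mere existence of an edge $H\sim K$ with $H,K$ not both ``rigid''. The basic tool is a monotonicity observation: if $HK=G$ and $H\le H'<G$, then $H'K=G$. So every proper subgroup containing a vertex inherits all of that vertex's neighbours.

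The first step handles an edge $H\sim K$ in which $H$ is not maximal. Take a maximal subgroup $M$ with $H<M$. Then $MK=G$, so $M\sim K$, and every neighbour of $H$ is also a neighbour of $M$. If $H$ has a second neighbour $K'\neq K$, with $K'\neq M$, then $H\sim K\sim M\sim K'\sim H$ is a $4$-cycle. (If $K'=M$, then $HM\subseteq M\neq G$, so this case cannot occur.) Hence, when the girth exceeds $4$, every non-maximal vertex is a leaf. It follows that every vertex of degree at least $2$ is a maximal subgroup. Also, if $K$ is a neighbour of a non-maximal $H$ and $K$ is not maximal, enlarge $K$ to a maximal $N$. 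Then $M\sim K$, $M\sim N$ and $H\sim N$, and also $H\sim K$, so $H\sim K\sim M\sim N\sim H$ is a $4$-cycle as long as these four vertices are distinct. So, up to degenerate cases, each leaf hangs off a maximal subgroup.

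Next, since $\Gamma^*(G)$ is connected (Theorem \ref{Gammastar_connected}) and contains a cycle, any shortest cycle $C$ passes only through vertices of degree at least $2$. By the first step these are all maximal subgroups. Take consecutive vertices $M_1\sim M_2\sim M_3$ on $C$. If any $M_i$ is not normal, its conjugates $xM_ix^{-1}$ are distinct vertices, and by Lemma \ref{conjugate-lemma} they are adjacent to both neighbours of $M_i$ on $C$. This gives the $4$-cycle $M_1\sim M_2\sim M_3\sim xM_2x^{-1}\sim M_1$. Therefore every vertex of $C$ is a normal maximal subgroup, hence of prime index. Any two distinct normal maximal subgroups $A,B$ satisfy $AB=G$, because $AB$ is a subgroup properly containing $A$. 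So $C$ has at least $3$ pairwise adjacent vertices, which is a triangle and contradicts the assumption that the girth exceeds $3$. This shows the girth is $3$ or $4$.

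The main obstacle is the degenerate case in the first step, where the vertices of the would-be $4$-cycle coincide. This happens, for example, when $K$ itself is the unique maximal subgroup containing $H$, or when $H$'s neighbours are limited. I would handle it by noting that the argument in the third paragraph only needs the cycle's vertices to have degree at least $2$. The conjugate argument fails only when the relevant subgroups are normal, and then the whole issue reduces to the triangle among normal maximal subgroups. If a shortest cycle contains a non-maximal vertex $H$ with two cycle-neighbours $K,K'$, the $4$-cycle $H\sim K\sim M\sim K'\sim H$ requires $M\notin\{K,K'\}$, which holds since $HM=M\neq G$. So I expect the careful distinctness bookkeeping to be the only delicate point.
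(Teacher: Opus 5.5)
Your proof is correct, and it takes a different route from the paper's.

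\textbf{The paper's route.} It does a global case split on the maximal subgroups of $G$:
\begin{itemize}
\item exactly two maximal subgroups: it uses $G\cong\mathbb{Z}_{p^aq^b}$ and the known girth $4$;
\item no normal maximal subgroup: it pushes an edge up to two maximal subgroups and uses their conjugates to build a $4$-cycle;
\item at least two normal maximal subgroups plus a third maximal subgroup: these form a triangle;
\item exactly one normal maximal subgroup $H$: it argues that a shortest cycle of length $\ge 5$ avoids $H$, then asserts ``without loss of generality'' that two consecutive vertices on it are maximal, which close a triangle with $H$.
\end{itemize}

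\textbf{Your route.} You work locally on a shortest cycle, assuming there are no $3$- or $4$-cycles. The monotonicity observation (overgroups inherit neighbours) makes every non-maximal vertex a leaf, so the cycle consists of maximal subgroups. Lemma~\ref{conjugate-lemma}, applied at each cycle vertex, forces that vertex to be normal. Distinct normal maximal subgroups are pairwise comaximal, which gives a triangle.

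\textbf{Comparison.} Your argument is uniform and needs neither the two-maximal-subgroup special case nor the external girth computation. It also gives a rigorous justification of the step the paper only asserts, namely that the relevant cycle vertices may be taken maximal. Replacing vertices by maximal overgroups could a priori collapse the cycle, and your leaf argument avoids that issue. The paper's case analysis, in exchange, says more about where short cycles come from. For example, when $G$ has no normal maximal subgroup, a $4$-cycle exists as soon as there is a single edge, with no cycle hypothesis at all.

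\textbf{Small remarks.}
\begin{itemize}
\item The ``degenerate cases'' you worry about cannot occur. Adjacent vertices $X\sim Y$ satisfy $XY=G$, so they are distinct and neither contains the other. In particular $K$ is never the maximal subgroup above $H$, and $xM_2x^{-1}\neq M_1,M_3$ because it is adjacent to both. Every distinctness check is therefore automatic, and the hedge ``up to degenerate cases'' can be dropped.
\item The appeal to Theorem~\ref{Gammastar_connected} is unnecessary, since every vertex on a cycle trivially has degree at least $2$. This is worth noting because that theorem rests on the classification of finite simple groups, while your argument does not.
\item The second $4$-cycle in your first step (the one through $N$) is redundant: $N$ is simply a second neighbour of $H$, so the first observation already covers it.
\end{itemize}
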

\begin{proof}
    Since $\Gamma^*(G)$ has at least one edge, the number of maximal subgroups of $G$ is at least $2$. If $G$ has exactly two maximal subgroups, then $G\cong \mathbb{Z}_{p^aq^b}$, where $p,q$ are distinct primes and it can be easily seen that girth of $\Gamma^*(G)$ is $4$. So, we assume that $G$ has at least three maximal subgroups.

    If no maximal subgroup of $G$ is normal in $G$, consider an edge $H\sim K$ in $\Gamma^*(G)$. Without loss of generality, we can assume $H$ and $K$ are maximal subgroups of $G$. As $H,K$ are not normal in $G$, there exists $x,y\in G$ such that $xHx^{-1}\neq H$ and $yKy^{-1}\neq K$. Thus, by Lemma \ref{conjugate-lemma}, we get a cycle $H\sim K\sim xHx^{-1}\sim yKy^{-1}\sim H$ of length $4$ in $\Gamma^*(G)$.

    If at least, two maximal subgroups, say $H,K$ are normal in $G$ and $L$ be any other maximal subgroup of $G$, then we get a $3$-cycle $H\sim K\sim L\sim H$ in $\Gamma^*(G)$.

    So, we assume that $G$ has exactly one maximal subgroup $H$ which is normal in $G$. If possible, let girth of $\Gamma^*(G)\geq 5$ and $C_1\sim C_2\sim \cdots C_k\sim C_1$ be a smallest cycle $\mathcal{C}$ of length $k\geq 5$ in $\Gamma^*(G)$. If $C_i=H$ for some $i$, then as $k\geq 5$, there exists two adjacent vertices $C_j,C_{j+1}$ which are not adjacent to $C_i=H$. Thus it follows that $C_j,C_{j+1}\subseteq H$ which contradicts that they are adjacent. Thus none of the $C_i$'s coincide with $H$. (In fact, no cycle of minimum length $k\geq 5$ can contain $H$)

    Without loss of generality, we can assume that two consecutive vertices of $\mathcal{C}$, say $C_1,C_2 \neq H$ are maximal subgroups of $G$. But this implies that $C_1,C_2,H$ forms a $3$-cycle, a contradiction.
\end{proof}

In case $G$ is solvable, we give a necessary and sufficient condition for the girth of $\Gamma^*(G)$ to be $3$ or $4$.

\begin{theorem}
    Let $G$ be a solvable group such that $\Gamma^*(G)$ has a cycle and $c$ be the number of distinct conjugacy classes of maximal subgroups of $G$. Then girth of $\Gamma^*(G)$ is $3$ if and only if $c\geq 3$.
\end{theorem}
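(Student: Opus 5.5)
The plan is to prove the two implications separately: the reverse direction will use Lemma \ref{solvable-conjugate-lemma} (this is where solvability enters), and the forward direction will use Lemma \ref{conjugate-lemma} together with the passage from arbitrary vertices to maximal subgroups.

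\emph{($c\geq 3\Rightarrow$ girth $3$).} Choose maximal subgroups $M_1,M_2,M_3$ of $G$ from three distinct conjugacy classes of maximal subgroups. They are pairwise distinct and pairwise non-conjugate. Since $G$ is solvable, Lemma \ref{solvable-conjugate-lemma} gives $M_iM_j=G$ for all $i\neq j$. Hence $M_1\sim M_2\sim M_3\sim M_1$ is a $3$-cycle in $\Gamma^*(G)$, so the girth is $3$.

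\emph{(girth $3\Rightarrow c\geq 3$).} Let $A\sim B\sim C\sim A$ be a triangle in $\Gamma^*(G)$, so $AB=BC=CA=G$. Choose maximal subgroups $M_A\supseteq A$, $M_B\supseteq B$, $M_C\supseteq C$. Then $M_AM_B\supseteq AB=G$, and similarly for the other two pairs, so each pair of these maximal subgroups has product $G$.
\begin{itemize}
\item The three are distinct: if, say, $M_A=M_B$, then $G=M_AM_B=M_A$, contradicting that $M_A$ is proper.
\item They are pairwise non-conjugate: suppose $M_B=gM_Ag^{-1}$. Since $G=M_A\cdot gM_Ag^{-1}$, Lemma \ref{conjugate-lemma} (applied with $H=M_A$, $K=gM_Ag^{-1}$, and the conjugating element $g^{-1}$ on the second factor) gives $G=M_A M_A=M_A$, again a contradiction.
\end{itemize}
Thus $M_A,M_B,M_C$ lie in three distinct conjugacy classes of maximal subgroups, so $c\geq 3$.

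The main point needing care is the forward direction. Enlarging the vertices of the triangle to maximal subgroups could a priori collapse two of them into one subgroup or make them conjugate. Both possibilities are ruled out by the fact, via Lemma \ref{conjugate-lemma}, that a proper subgroup can never be multiplied by one of its conjugates to give $G$. Solvability is needed only for the reverse direction. The hypothesis that $\Gamma^*(G)$ has a cycle only ensures that the girth is defined, and by Proposition \ref{cycle-lemma} it then equals $4$ exactly when $c\leq 2$.
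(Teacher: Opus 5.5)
Your proof is correct and follows the same approach as the paper. Lemma \ref{solvable-conjugate-lemma} handles $c\geq 3\Rightarrow$ girth $3$, and for the converse you enlarge a triangle to three maximal subgroups. Your use of Lemma \ref{conjugate-lemma} (in its intended form $G=(xHx^{-1})(yKy^{-1})$) to show that these maximal subgroups remain distinct and pairwise non-conjugate spells out what the paper covers with ``without loss of generality'' and ``clearly''.
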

\begin{proof}
    If $c\geq 3$, let $M_1,M_2,M_3$ be three non-conjugate maximal subgroups of $G$. Then we have $M_1\sim M_2\sim M_3\sim M_1$ (by Lemma \ref{solvable-conjugate-lemma}) giving rise to a cycle of length $3$ in $\Gamma^*(G)$. 

Conversely, let girth of $\Gamma^*(G)$ be $3$. Then without loss of generality, we get three maximal subgroups $M_1,M_2,M_3$ forming a $3$-cycle in $\Gamma^*(G)$. Clearly, these three maximal subgroups are not conjugate in $G$. Thus $c\geq 3$. 
\end{proof}

\begin{remark}
    It is to be noted that for the converse part, solvability of $G$ is not required. Moreover, it follows that if $G$ is a solvable group such that $\Gamma^*(G)$ has girth $4$, then $c=2$. However, this does not hold for non-solvable groups. For example, $\Gamma^*(A_5)$ has girth $4$ and $c\geq 3$.
\end{remark}

\section{When is $\Gamma^*(G)$ a star?}\label{star-section}
In this section, we characterize the groups in for which $\Gamma^*(G)$ is a star. It is worth noting that if $G$ is nilpotent, this has been characterized in the following theorem from \cite{1st-paper}. 

\begin{theorem}[Theorem 3.4,\cite{1st-paper}]
    If $G$ is nilpotent and $\Gamma^*(G)$ is a star, then $G$ is a cyclic group of order $p^nq$, where $p,q$ are distinct primes.
\end{theorem}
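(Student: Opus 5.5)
The approach is to use the structure of nilpotent groups: $G$ is the direct product of its Sylow subgroups, and every maximal subgroup of $G$ is normal of prime index. In particular, the maximal subgroups of $G$ correspond to maximal subgroups of the elementary abelian quotient $G/\Phi(G)$. Throughout, the star condition will be used in the following form: if two maximal subgroups $M_1,M_2$ and a third vertex $X$ satisfy $M_1\sim X$ and $M_2\sim X$, then either $M_1\sim M_2$ forms a triangle, or the edges can be extended to a $4$-cycle. Either outcome contradicts $\Gamma^*(G)$ being a star, which is a tree.

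\textbf{Step 1: $G$ is not a $p$-group.} If $G$ is a $p$-group, all its maximal subgroups contain $\Phi(G)$. Any product $HK=G$ forces $H\Phi(G)K\Phi(G)=G$, and hence $H=G$ or $K=G$ by the Burnside basis theorem, since $G/\Phi(G)$ is elementary abelian and a product of two proper subgroups of a vector space is never the whole space. So $\Gamma^*(G)$ is empty, a contradiction. Hence at least two primes divide $|G|$.

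\textbf{Step 2: exactly two primes, each Sylow subgroup cyclic.} Write $G=P_1\times\cdots\times P_r$ with $r\ge 2$. Each $P_i$ has a maximal subgroup, giving normal maximal subgroups $M_i$ of $G$ with distinct prime indices, and any two of them are adjacent. If $r\ge 3$, then $M_1,M_2,M_3$ form a triangle. If some $P_i$ is non-cyclic, it has at least two maximal subgroups, giving $M_i,M_i'$ of index $p_i$, both adjacent to $M_j$ ($j\ne i$) and to the Sylow $p_i$-subgroup's complement-type vertices; concretely $M_i\sim M_j\sim M_i'$ together with $P_j$-side vertices yields a $4$-cycle $M_i\sim M_j\sim M_i'\sim M_j'\sim M_i$, where $M_j'$ is a second vertex adjacent to both. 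Taking $M_j'=P_i$ works whenever $P_j\ne 1$, since $P_iM_i\supseteq P_i\cdot(\text{complement})=G$. Therefore $G\cong\mathbb{Z}_{p^a}\times\mathbb{Z}_{q^b}$ is cyclic of order $p^aq^b$.

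\textbf{Step 3: one exponent equals $1$.} In the cyclic group $\mathbb{Z}_{p^aq^b}$, subgroups correspond to divisors $p^iq^j$, and $H\sim K$ iff $\operatorname{lcm}$ of their orders is $p^aq^b$. If $a,b\ge 2$, the subgroups of orders $p^aq^{b-1}$, $p^{a-1}q^b$, $p^a$ and $q^b$ form a $4$-cycle: the order-$p^a$ subgroup is adjacent to both $p^{a-1}q^b$ and $q^b$, and the order-$q^b$ subgroup is adjacent to $p^aq^{b-1}$ and $p^a$. Rechecking this, the vertices $p^aq^{b-1}\sim q^b\sim p^a\sim p^{a-1}q^b\sim p^aq^{b-1}$ close up because $\operatorname{lcm}(p^{a-1}q^b,p^aq^{b-1})=p^aq^b$. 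So $\min(a,b)=1$, giving order $p^nq$.

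The main obstacle is Step 2, namely exhibiting the cycle cleanly when a Sylow subgroup is non-cyclic. I would handle it by passing to $G/\Phi(G)$ via Lemma 2.2, where $\Gamma(G/\Phi(G))$ is an induced subgraph. There the group is $(\mathbb{Z}_p)^k\times\mathbb{Z}_q^{l}$, and an explicit $4$-cycle among subgroups containing $\Phi(G)$ lifts to $\Gamma^*(G)$.
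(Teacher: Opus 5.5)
The paper only quotes this result from its earlier reference and gives no proof, so your argument has to stand on its own. Steps 2 and 3 essentially do, but Step 1 rests on a false claim. A product of two proper subgroups of an elementary abelian group can be the whole group: in $\mathbb{Z}_p\times\mathbb{Z}_p$ one has $\langle e_1\rangle\langle e_2\rangle=G$. What is true is that a group is never the \emph{union} of two proper subgroups. So $H\Phi(G)\cdot K\Phi(G)=G$ does not give $H\Phi(G)=G$ or $K\Phi(G)=G$, and the Burnside basis theorem never applies. Your conclusion that $\Gamma^*(G)$ is empty for every $p$-group is therefore false. In a non-cyclic $p$-group, two distinct maximal subgroups $M_1,M_2$ are normal, so $M_1M_2$ is a subgroup properly containing $M_1$ and hence equals $G$; for example $\Gamma^*(\mathbb{Z}_2\times\mathbb{Z}_2)=K_3$. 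This is exactly the adjacency you use yourself in Step 2, so Step 1 is inconsistent with the rest of your argument. Without it you have no justification for $r\ge 2$, and the $p$-group case is left unhandled.

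The repair is to use that observation from the start. In a nilpotent group every maximal subgroup is normal, so any two distinct maximal subgroups are adjacent. A star has no triangle, so $G$ has at most two maximal subgroups. If it has only one, $G$ is a cyclic $p$-group, every proper subgroup lies in that maximal subgroup, and $\Gamma^*(G)$ has no edges. Hence $G$ has exactly two maximal subgroups, and $G/\Phi(G)\cong\mathbb{Z}_p\times\mathbb{Z}_q$ is cyclic. Then $G$ itself is cyclic of order $p^aq^b$: if $g\Phi(G)$ generates $G/\Phi(G)$, then $\langle g\rangle\Phi(G)=G$, so $\langle g\rangle=G$, which is the correct use of the Frattini argument.

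This single argument replaces Step 1 and also the non-cyclic case of Step 2. There, your $4$-cycle $M_i\sim M_j\sim M_i'\sim P_i\sim M_i$ degenerates when $G=P_i\times\mathbb{Z}_{p_j}$, because then $M_j=P_i$. By contrast, $M_i\sim M_i'$ gives the triangle $M_i,M_i',M_j$ at once. Your Step 3 cycle $p^aq^{b-1}\sim q^b\sim p^a\sim p^{a-1}q^b\sim p^aq^{b-1}$ for $a,b\ge 2$ is correct and finishes the proof.
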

In the rest of this section, we deal with the case when $G$ is non-nilpotent.

\begin{theorem}\label{solvable_star}
    Let $G$ be a solvable group such that $\Gamma^*(G)$ is a star, then $G\cong H \rtimes \mathbb{Z}_q$, where $H$ is a $p$-group and $p,q$ are distinct primes.
\end{theorem}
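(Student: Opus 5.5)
We aim to show that $G$ has exactly two conjugacy classes of maximal subgroups: a normal one $H$ of index a prime $q$, and a non-normal class whose members $K$ satisfy $HK=G$. We then show that the whole structure is carried by two primes.

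First, $\Gamma^*(G)$ is a star, so it has an edge. Hence $G$ has at least two maximal subgroups, and since $G$ is solvable (in particular not perfect), Lemma \ref{no-normal-perfect} gives a normal maximal subgroup $N$ of prime index $q$. Next, let $c$ be the number of conjugacy classes of maximal subgroups. If $c\geq 3$, Lemma \ref{solvable-conjugate-lemma} produces a triangle among three pairwise non-conjugate maximal subgroups, which is impossible in a star. So $c\le 2$. If $c=1$, all maximal subgroups are conjugate. Since one of them is normal, it is the unique maximal subgroup, $G$ is cyclic of prime power order, and $\Gamma^*(G)$ is empty. Hence $c=2$.

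Now suppose both classes were normal. Then there are exactly two maximal subgroups, both normal, so $G$ is nilpotent, and by Theorem 3.8 of \cite{1st-paper} we get $G\cong\mathbb{Z}_{p^aq^b}$ with a $4$-cycle, a contradiction. The nilpotent case is in any case already covered by the cited theorem. So the classes are $\{N\}$ and a class of non-normal conjugates $K_1,\dots,K_r$ with $r\ge 2$. The center of the star must then be $N$: the $K_i$ are pairwise adjacent to $N$, and if the center were some $K_i$, Lemma \ref{conjugate-lemma} would make $N$ adjacent to at least two of the $K_j$, creating a cycle or forcing $N$ to be the center anyway.

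Next we pin down the primes. Since every maximal subgroup other than $N$ is conjugate to $K=K_1$, the group $G/\Phi(G)$ has exactly two classes of maximal subgroups, one of them normal of index $q$. Here is the argument for $G$ itself. Every normal maximal subgroup is $N$, so $G/G'$ has a unique maximal subgroup, and $G/G'$ is cyclic of $q$-power order. Since $G'\le N$ and $G$ is solvable, every maximal subgroup containing $G'$ equals $N$. The remaining maximal subgroups $K_i$ do not contain $G'$, and they have index a power of a single prime $p$, since conjugates share an index. For solvable $G$, the set of primes dividing indices of maximal subgroups is exactly $\pi(G)$: a Sylow $s$-subgroup lies in a Hall $s'$-complement, which lies in a maximal subgroup of $s$-power index. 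So $\pi(G)=\{p,q\}$. Moreover $p\neq q$, since otherwise $G$ would be a $p$-group and hence nilpotent.

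It remains to obtain the splitting $G=P\rtimes\mathbb{Z}_q$ with $P$ a Sylow $p$-subgroup. A Hall $p'$-subgroup is a Sylow $q$-subgroup $Q$. It lies in a maximal subgroup of $p$-power index, i.e. in some conjugate of $K$, and $P$ lies in $N$. This is the main obstacle: we must show $P\lhd G$ and $|Q|=q$. For $|Q|=q$, we use the star structure. If $|Q|\ge q^2$, then $Q\cap N$ is a nontrivial subgroup that I expect to give an extra edge. A subgroup $X$ with $XK=G$, $X\neq N$, and $X$ of $q'$... more concretely, I would take $X=PQ_0$ with $Q_0=Q\cap N$ and check that $X K_i=G$ while $X\ne N$, which yields a leaf-to-leaf edge or a cycle and contradicts the star. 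For $P\lhd G$, take $O_p(G)$. By the analogous argument every subgroup of $N$ must fail to supplement $K$ unless it contains $N$ up to $\Phi$, and this should force $N=P$ to be the unique Sylow $p$-subgroup. This gives $G\cong P\rtimes\mathbb{Z}_q$. Checking these two uses of the star condition carefully is where I expect the real work to lie.
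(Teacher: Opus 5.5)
There is a genuine gap, and it sits exactly at the step you flag as ``the real work.'' Up to the point where you have $\pi(G)=\{p,q\}$, $P\le N$, and $N$ as the centre of the star, your argument is acceptable. But you never prove $|Q|=q$ or $P\lhd G$, and the construction you propose for $|Q|=q$ cannot succeed. We have $P\le N$ and $Q_0=Q\cap N\le N$ with $|Q_0|=|Q|/q$. So $PQ_0$ is a subset of $N$ with $|P||Q|/q=|N|$ elements, i.e.\ $X=PQ_0=N$, and your requirement $X\neq N$ fails. The remark that subgroups of $N$ must fail to supplement $K$ ``up to $\Phi$'' is not an argument for $P\lhd G$ either.

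The missing idea is that a Sylow subgroup and its Hall complement are adjacent in $\Gamma^*(G)$; this is how the paper proceeds.
\begin{itemize}
\item Since $\pi(G)=\{p,q\}$, you have $|PQ|=|P||Q|=|G|$, so $PQ=G$. Both are nontrivial proper subgroups, so $P\sim Q$ is an edge of $\Gamma^*(G)$.
\item Every edge of a star contains the centre $N$. Also $Q\neq N$, since otherwise $|G|=q|Q|$ would be a power of $q$. Hence $P=N$.
\item This gives $P\lhd G$ and $|Q|=[G:N]=q$ at once, so $G=P\rtimes Q\cong P\rtimes\mathbb{Z}_q$.
\end{itemize}
The paper does not need your detour through Ore's lemma and indices of maximal subgroups. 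It takes the Hall $q'$-complement of a Sylow $q$-subgroup, which must be the centre $H$. It then rules out a third prime $r\mid |H|$: a Sylow $r$-subgroup $R\le H$ and its Hall complement $L$ would be adjacent, forcing $L=H$ and hence $G=RH=H$.

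Separately, your claim that two normal maximal subgroups force a $4$-cycle is false. $\mathbb{Z}_{p^nq}$ has exactly two maximal subgroups, both normal, and $\Gamma^*(\mathbb{Z}_{p^nq})\cong K_{1,n}$ is a star. That case is not a contradiction. It satisfies the conclusion as $\mathbb{Z}_{p^n}\times\mathbb{Z}_q$, which is exactly what the nilpotent classification you cite yields, so it should be handled that way.
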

\begin{proof}
    Let $H$ be the central vertex. Then it is easy to see that $H$ is a maximal subgroup of $G$ and $H\lhd G$. As a result, $[G:H]=q$, a prime. If $q$ is the only prime factor of $G$, then $G$ is nilpotent. However, by Theorem 3.4 \cite{1st-paper}, we get a contradiction. Thus $|G|$ has a prime factor, say $p$, other than $q$. Let $Q$ be a Sylow $q$-subgroup of $G$. Clearly $Q\neq H$. As $G$ is solvable, $Q$ must have a Hall complement, say $K$, i.e., $QK=G$. However, as $\Gamma^*(G)$ is a star, we must have $K=H$. Thus $H$ is a Hall subgroup of $G$ and $p\mid |H|$. If possible, let $r\neq p$ be a prime divisor of $|H|$ and $R$ be a Sylow $r$-subgroup of $G$. Then $R$ has a Hall complement, say $L$. Thus $RL=G$. Since $R\neq H$ and $\Gamma^*(G)$ is a star, this implies $L=H$, i.e., $RH=G$ a contradiction, as $R\leq H$. Thus $H$ is a Sylow $p$-subgroup of $G$. Hence the theorem follows.
\end{proof}

\begin{remark}
    In the above theorem, the $p$-group $H$ may be non-abelian, for example, if $G\cong SL(2,3)\cong Q_8 \rtimes \mathbb{Z}_3$, then $\Gamma^*(G)$ is a star. However, in the next theorem, we show that if $G$ is supersolvable, then $H$ must be a cyclic $p$-group.
\end{remark}

\begin{theorem}\label{supersolvable_star}
    Let $G$ be a supersolvable group such that $\Gamma^*(G)$ is a star, then $G\cong \mathbb{Z}_{p^n} \rtimes \mathbb{Z}_q$, where $p,q$ are distinct primes.
\end{theorem}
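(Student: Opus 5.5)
Since a supersolvable group is solvable, Theorem \ref{solvable_star} gives $G\cong H\rtimes \mathbb{Z}_q$, where the central vertex $H$ is a normal Sylow $p$-subgroup of index $q$. It remains to show that $H$ is cyclic. The idea is that if $H$ is not cyclic, then supersolvability produces a vertex of $\Gamma^*(G)$ other than $H$ that has a neighbour other than $H$, contradicting the star structure.

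Recall the key fact from the star structure, already used in the proof of Theorem \ref{solvable_star}: every edge of $\Gamma^*(G)$ contains $H$. Hence any factorization $G=AB$ with $A,B$ proper and non-trivial forces $A=H$ or $B=H$. Let $Q$ be a Sylow $q$-subgroup; it has order $q$ and $G=HQ$. Suppose $H$ is not cyclic. Then $H/\Phi(H)$ is elementary abelian of rank $d\geq 2$. Moreover $\Phi(H)$ is characteristic in $H$, so it is normal in $G$, and $\Phi(H)\leq \Phi(G)$.

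Now pass to $\bar G=G/\Phi(H)=\bar H\rtimes \bar Q$, where $\bar H\cong \mathbb{Z}_p^d$. This quotient is supersolvable. In a supersolvable group, $\bar Q$ acts on $\bar H$ completely reducibly by Maschke's theorem, since $p\neq q$. Supersolvability also forces the chief factors, hence the irreducible summands, to have order $p$. So $\bar H=\bar N_1\times \cdots\times \bar N_d$, where each $\bar N_i$ has order $p$ and is $\bar Q$-invariant.

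Let $N_1\geq \Phi(H)$ be the full preimage of $\bar N_1$, and let $M$ be the preimage of $\bar N_2\cdots\bar N_d\,\bar Q$. Then $M$ is a proper subgroup of index $p$ in $G$, and $M\neq H$ because $M$ contains a Sylow $q$-subgroup. Also $N_1$ is proper and non-trivial, with $N_1\neq H$ since $d\geq 2$. Finally $N_1M$ contains $\Phi(H)$ together with all of $\bar H\bar Q$, so $N_1M=G$. This gives an edge $N_1\sim M$ with neither endpoint equal to $H$, a contradiction. Hence $d=1$, so $H$ is cyclic, $H\cong \mathbb{Z}_{p^n}$, and $G\cong \mathbb{Z}_{p^n}\rtimes\mathbb{Z}_q$.

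The main obstacle is justifying that the $\bar Q$-module $\bar H$ splits into one-dimensional invariant pieces. Here I would use the standard fact that the chief factors of a supersolvable group are cyclic of prime order, combined with Maschke's theorem.

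A simpler alternative, if $d\ge2$ but $\bar Q$ acts trivially on some complement, is to take any subgroup $\bar N_1$ of order $p$ together with a $\bar Q$-invariant complement; the argument is the same.
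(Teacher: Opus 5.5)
Your proof is correct, but it takes a different route from the paper.

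\textbf{The paper's route.} The paper does not pass to $G/\Phi(H)$ or use Maschke's theorem. It fixes a maximal subgroup $K\neq H$. Supersolvability makes $[G:K]$ prime, and $K$ cannot be a second Sylow $p$-subgroup, so $|K|=p^{n-1}q$. Every proper subgroup $L$ of $H$ must then lie in $K$: otherwise $|LK|\geq p|K|=|G|$ gives an edge $L\sim K$ avoiding $H$. Hence every subgroup of order $p^{n-1}$ lies in $H\cap K$, which has order $p^{n-1}$. So $H$ has a unique maximal subgroup and is cyclic.

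\textbf{Your route.} You split $H/\Phi(H)$ into $\bar Q$-invariant lines, using Maschke plus the fact that minimal normal subgroups of a supersolvable group have prime order. It is worth saying explicitly that a $\bar Q$-invariant subgroup of the abelian group $\bar H$ is normal in $\bar G$, so each irreducible summand really is a minimal normal subgroup. You then write down an explicit forbidden edge $N_1\sim M$.

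\textbf{Comparison.} Your version makes visible exactly where $SL(2,3)$ escapes: there $\bar Q$ acts irreducibly on $\bar H\cong\mathbb{Z}_2^2$. But it uses more than is needed. You only need one $\bar Q$-invariant hyperplane $\bar W$ of $\bar H$, which gives $M$ of index $p$, and any line $\bar N_1\not\leq\bar W$. Then $|N_1M|=p|M|=|G|$ by counting, without any invariance of $\bar N_1$. That is essentially the paper's argument, with $K$ playing the role of $M$. The paper's version is shorter and relies only on the prime-index property of maximal subgroups; yours is more structural.

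Two minor points: the remark $\Phi(H)\leq\Phi(G)$ is never used, and your final \emph{simpler alternative} paragraph is vague and unnecessary. Both can be dropped.
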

\begin{proof}
    As $G$ is supersolvable, by Theorem \ref{solvable_star}, $G\cong H \rtimes \mathbb{Z}_q$, where $H$ is a $p$-group and $p,q$ are distinct primes. Moreover, as $G$ is supersolvable, any maximal subgroup of $G$ is of prime index. Thus any maximal subgroup of $G$ is of order $p^n$ or $p^{n-1}q$ and $H$ is the only maximal subgroup of order $p^n$ in $G$. Let $K$ be a maximal subgroup of order $p^{n-1}q$ in $G$. Note that all subgroups of $H$ are contained in $K$, as otherwise we get an edge joining that subgroup and $K$ in $\Gamma^*(G)$, which is a star, a contradiction. Again, as $H$ is the normal Sylow $p$-subgroup of $G$, any subgroup of $G$ of order $p^{n-1}$ is contained in $H$ and hence contained in $K$. Also as $H\sim K$, we have $|H\cap K|=p^{n-1}$. Thus, it follows that $G$, and hence $H$, has exactly one subgroup of order $p^{n-1}$, making $H$ cyclic.
\end{proof}

\begin{theorem}\label{nonsolvable_star}
    Let $G$ be a non-solvable group such that $\Gamma^*(G)$ is a star. Then $G/\Phi(G)$ has a maximal subgroup $S$ such that $S$ is also minimal normal in $G$ and $S$ is a non-abelian simple group.
\end{theorem}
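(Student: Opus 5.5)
The plan is to first pin down the central vertex of the star. Then I pass to $\bar G=G/\Phi(G)$, where the star structure survives. There I show that the central vertex is a minimal normal subgroup, and finally that it is a non-abelian simple group. (I read the statement as saying that $\bar G$ has a maximal subgroup $S$ which is minimal normal in $\bar G$; with $S=\bar H$ as below, this is what I aim for.)

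\emph{Step 1: the center $H$ of the star is a normal maximal subgroup of $G$.} Let $H$ be the central vertex, so every other vertex $L$ satisfies $HL=G$. Since $G$ is non-solvable it is not cyclic, so it has at least two maximal subgroups, and $\Gamma^*(G)$ has at least two leaves. If $H$ were not maximal, pick a maximal $M\supsetneq H$. Then $ML\supseteq HL=G$ for every leaf $L$, so $M$ is adjacent to all leaves. This produces a cycle unless $M=H$, so $H$ is maximal. If $H$ were not normal, Lemma \ref{conjugate-lemma} would make a conjugate $xHx^{-1}\neq H$ adjacent to every leaf as well, again giving a cycle. Hence $H\lhd G$, $[G:H]=q$ is prime, and $\Phi(G)\le H$.

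\emph{Step 2: reduction modulo $\Phi(G)$.} By the lemma that $\Gamma(G/N)$ is an induced subgraph of $\Gamma(G)$, the graph $\Gamma^*(\bar G)$ is a subgraph of a star. It is connected by Theorem \ref{Gammastar_connected} and acyclic, so it is a star by Proposition \ref{Gamma*_tree=star}. Its center is $\bar H=H/\Phi(G)$, which is a normal maximal subgroup of index $q$. Moreover $\Phi(\bar G)=1$, and $\bar G$ is still non-solvable. I will take $S=\bar H$.

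\emph{Step 3: $\bar H$ is minimal normal.} Let $N$ be a minimal normal subgroup of $\bar G$. Since $\Phi(\bar G)=1$, some maximal $M$ does not contain $N$, so $NM=\bar G$ and $N\sim M$. One of $N,M$ must be the center $\bar H$.
\begin{itemize}
\item If $N=\bar H$, we are done.
\item Otherwise $M=\bar H$, so $N\not\le\bar H$. Then $N\cap\bar H=1$, hence $|N|=q$ and $\bar G=\bar H\rtimes N$. By the star property, $N$ lies in every maximal subgroup other than $\bar H$. Also every proper nontrivial subgroup $K$ with $K\not\le \bar H$ can only be adjacent to $\bar H$.
\end{itemize}
To exclude the second case, I will use non-solvability. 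Take a prime $r\ne q$ dividing $|\bar H|$ and a Sylow $r$-subgroup $R$ of $\bar H$ with $R\ntrianglelefteq\bar G$. By the Frattini argument, $\bar G=\bar H N_{\bar G}(R)$, so $N_{\bar G}(R)$ contains a $q$-element. I then try to build two subgroups, neither equal to $\bar H$, whose product is $\bar G$. The candidates are $N_{\bar G}(R)$ and a suitable conjugate, or $N_{\bar G}(R)$ together with a subgroup $\langle x\rangle R'$ for another Sylow subgroup $R'$. Such a pair is an edge avoiding the center, which contradicts the star structure. Alternatively, I can show that $\bar H$ is the unique minimal normal subgroup candidate by arguing that $C_{\bar H}(N)$ together with $N$ forces too many maximal subgroups containing $N$.

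\emph{Step 4: simplicity.} Since $\bar G/\bar H$ has prime order and $\bar G$ is non-solvable, $\bar H$ is non-solvable. As a minimal normal subgroup, $\bar H\cong T^k$ with $T$ non-abelian simple. If $k>1$, then $\langle x\rangle$, with $x$ a $q$-element outside $\bar H$, permutes the factors transitively, so $k=q$. In that case I would exhibit a subgroup such as $T_1\times\cdots\times T_{k-1}$, or a proper subgroup of $\bar H$ built from a factor, together with a maximal subgroup containing a diagonal-type subgroup and $x$, whose product is $\bar G$ but which avoids $\bar H$. That would again contradict the star property, forcing $k=1$.

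\emph{Expected obstacle.} I expect the main difficulty in Steps 3 and 4: producing an explicit edge not through the center. Non-solvability has to be used genuinely here. My first attempt will be via the Frattini argument and normalizers of Sylow subgroups.
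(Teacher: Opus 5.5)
\textbf{Gap in Step 3.} Your Step 3 has a genuine gap, and the strategy you propose for it cannot succeed. You take an \emph{arbitrary} minimal normal subgroup $N$ of $\bar G$ and try to rule out the case $N\not\le\bar H$. In effect you are trying to show that $\bar H$ is the only minimal normal subgroup, which is also what your ``alternatively'' sentence aims at. This is false.

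Let $G=T\times\mathbb{Z}_5$ with $T=PSL(2,13)$.
\begin{itemize}
\item Since $5\nmid|T|$, every subgroup of $G$ is $A\times B$ with $A\le T$ and $B\le\mathbb{Z}_5$.
\item $T$ has no factorization into two proper subgroups. Its maximal subgroups are $13{:}6$ and $A_4$ and the dihedral groups of orders $14$ and $12$. By orders, a factorization would need a point stabilizer $13{:}6$ of the projective line times a subgroup transitive on its $14$ points. None of these four is transitive: the dihedral group of order $14$ would have to be regular, but its involutions fix points since $13\equiv 1 \pmod 4$.
\item Hence the edges of $\Gamma(G)$ are exactly $T\times 1\sim A\times\mathbb{Z}_5$ with $A<T$. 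So $\Gamma^*(G)$ is a star centred at $T\times 1$, and $\Phi(G)=1$.
\end{itemize}
Here $N=1\times\mathbb{Z}_5$ is a minimal normal subgroup lying precisely in your second case. There is no edge avoiding the centre, so no Frattini-argument or Sylow-normalizer construction can produce one. This is the $S\times\mathbb{Z}_p$ situation of the remark after the theorem.

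The missing idea is simply to choose $N\le\bar H$, which is possible because $\bar H\neq 1$. A maximal subgroup $M$ with $N\not\le M$ exists since $\Phi(\bar G)=1$, and it is automatically different from $\bar H$. The edge $N\sim M$ must contain the centre, which forces $N=\bar H$. This is the paper's argument: if $N\ne\bar H$, then $N$ lies in every maximal subgroup other than $\bar H$ and in $\bar H$ itself, so $N\le\Phi(\bar G)=1$.

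\textbf{Step 4.} Step 4 has the right idea, the same as the paper's, but you still need to commit to an explicit pair. For $k=q$, take $A=T_1\times\cdots\times T_{q-1}$ and $B=D\langle x\rangle$, where $D$ is a full diagonal of $\bar H$ normalized by $x$. Then $A\cap D=1$ gives $AD=\bar H$, hence $AB=\bar H\langle x\rangle=\bar G$, with $A,B\neq\bar H$. If instead $B=\bar G$, then $D\sim\langle x\rangle$ is already an edge avoiding the centre.

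To make $x$ normalize a diagonal, first replace $x$ by $xg$ with $g\in T_1$ chosen so that $(xg)^q$ has trivial $T_1$-component. Then define $D$ through the identifications $T_i=x^{i-1}T_1x^{1-i}$. The paper also passes over this point silently.
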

\begin{proof}
    By Lemma \ref{Phi(G)-NASC}, it follows that $\Gamma^*(G/\Phi(G))$ is a star. Moreover $G/\Phi(G)$ is also non-solvable with trivial Frattini subgroup. Thus it is enough to prove the result for a non-solvable group $G$ with trivial Frattini subgroup. If $H$ is the unique universal vertex in $\Gamma^*(G)$, arguing as above, one can show that $H$ is a maximal and normal subgroup of $G$.
    

    
    Let $N$ be a minimal normal subgroup of $G$ which is contained in $H$. Suppose $N\neq H$. If $N$ is not contained in some maximal subgroup $M\neq H$ of $G$, then $N\sim M$, contradicting $M$ is a leaf. Thus $N$ must be contained in all maximal subgroups of $G$, i.e., $N\subseteq \Phi(G)$. Thus $\Phi(G)$ is non-trivial, a contradiction. Thus $N=H$ is a minimal normal subgroup of $G$. Now, as $H$ is non-solvable (since $G/H$ is of prime order), $H\cong S^k$, where $S$ is a non-abelian simple group. Let $[G:H]=p$, a prime.

    Let $H\cong S^k\cong S_1\times S_2 \times \cdots \times S_k$, where $S_i\cong S$ for each $i$. Since $H\lhd G$, conjugation by an element $x \in G$ is an automorphism of $H$. Again, since $S_i$'s are the only minimal normal subgroups of $H$, the conjugation by an element $x\in G$ induces a permutation on $\Omega=\{S_1,S_2,\ldots,S_k\}$, i.e., $\varphi:G \rightarrow Sym(\Omega)$ is a group homomorphism induced by the conjugation action. Also note that any $h \in H$ induces the identity permutation on $\Omega$. Thus we get an induced homomorphism $\overline{\varphi}:G/H\rightarrow Sym(\Omega)$ given by $\overline{\varphi}(xH)=\varphi(x)$. As $|G/H|=p$, image of $\overline{\varphi}$ is a subgroup of $Sym(\Omega)$ of order dividing $p$. 

    We claim that the action given by $\overline{\varphi}$ is transitive. If not, let $A=\{S_1,S_2,\ldots,S_t\}$ be an orbit with $t<k$. Then $N=S_1\times S_2\times \cdots \times S_t$. Then $gNg^{-1}=N$ for all $g \in G$, i.e., $N\lhd G$ contradicting that $H$ is a minimal normal subgroup. Thus the action of $G/H$ given by $\overline{\varphi}$ is transitive. Thus by orbit-stabilizer theorem, $k=1$ or $p$.

    If $k=p$, there exists $x \in G\setminus H$ such that conjugation by $x$ cyclically permutes the $S_i$'s, i.e., $xS_ix^{-1}=S_{i+1}$ where addition in index is done modulo $p$.

    Let $D$ be the diagonal subgroup of $H$, i.e., $D=\{(s,s,\ldots,s):s\in S\}$. Since conjugating by $x$ permutes the coordinates cyclically and all coordinates of $D$ are equal, we have $xdx^{-1}\in D$ for all $d \in D$. Thus $D$ is normalized by $x$, and hence by all powers of $x$. Hence we have $D\langle x\rangle=\langle x\rangle D$ and $B=D\langle x \rangle$ is a proper subgroup (otherwise we get a different edge $D\sim \langle x \rangle$ in $\Gamma^*(G)$ which is a star) of $G$. Let $A=S_1\times S_2\times \cdots \times S_{p-1}$ be a proper subgroup of $H$, where $S_i\cong S$ for all $i$. Then we have $G=AB$. Now, as $A,B\neq H$, we get an inadmissible edge $A \sim B$ in $\Gamma^*(G)$ which is a star, a contradiction. Thus $k\neq p$, i.e., $k=1$ and $H=S$.
\end{proof}

\begin{remark}
    In the above theorem, if $G$ has a normal subgroup other than $S$, then it can be shown that $G/\Phi(G) \cong S\times \mathbb{Z}_p$, where $p$ is a prime and $S$ is non-factorizable (i.e., we do not have subgroups $A,B\leq S$ such that $S=AB$).
\end{remark}

\section{Conclusion and Open Issues}
In this paper we have completely resolved the question of connectivity for the deleted comaximal subgroup graph $\Gamma^*(G)$ of a finite group $G$. Furthermore, we established a structural dichotomy: either  $\Gamma^*(G)$ contains a cycle, in which case the girth is necessarily $3$ or $4$, or $\Gamma^*(G)$ is a star graph. For the latter case we provided a complete classification, covering nilpotent, solvable, supersolvable and non‑solvable groups.

Two loose ends of this work are as follows:
\begin{enumerate}
    \item It was computationally checked for groups $G$ of order upto $600$ that $diam(\Gamma^*(G))=3$. However, at present we do not have a proof of that.
    \item While we used the classification of finite simple groups and the factorisation tables of Liebeck, Praeger and Saxl to handle the simple group case, a purely group‑theoretic proof of Claim 2 (or a more elementary approach to connectivity) might still exist. Finding such a proof would be of independent interest.
\end{enumerate}

\section*{Acknowledgement}
The authors acknowledge the departmental funding of DST-FIST Sanction no. \\$SR/FST/MS-I/2019/41$. The third author is supported by UGC PhD Junior Research Fellowship, Govt. of India.

\subsection*{Statements and Declarations}
Data sharing not applicable to this article as no datasets were generated or analysed during the current study. The authors have no competing interests to declare that are relevant to the content of this article.

\end{document}